\newtheorem{theorem}{Theorem}
\numberwithin{theorem}{section}
\newtheorem{proposition}[theorem]{Proposition}
\newtheorem{lemma}[theorem]{Lemma}
\numberwithin{equation}{section}
\newcommand{\floor}[1]{\lfloor #1\rfloor}
\def\CC{\mathbb C}
\def\NN{\mathbb N}
\def\ZZ{\mathbb Z}
\def\us{\underset}
\def\mf{\mathfrak}
\def\eps{\varepsilon}
\begin{document}

\title{Moments of an exponential sum related to the divisor function}
\author[M. Pandey]{Mayank Pandey}
\address {Department of Mathematics, California Institute of Technology, Pasadena, CA 91125}
\email{mpandey@caltech.edu}

\begin{abstract}
We use the circle method to obtain tight bounds on the $L^p$ norm of an exponential sum involving the divisor function for 
$p > 2$.
\end{abstract}
\maketitle

\section{Introduction}
Let $X\ge 1$ be sufficiently large.
For a function $f:\NN\rightarrow\CC$, let 
\[M_f(\alpha) := \sum_{n\le X}f(n)e(n\alpha)\]
where as usual, $e(\alpha) := e^{2\pi i\alpha}$.
Information on the structure of $f(n)$ can be
obtained by studying the size of $L^p$-integrals of $M_f(\alpha)$, and bounds on them are often useful in applications of the circle
method. In particular, often, when bounding the contribution from ``minor arcs" in an application of the circle method, one is led to bounding the $L^\infty$ norm of an exponential sum in the minor arcs times 
$\int_0^1 |G(\alpha)|^p d\alpha$ for some $G(\alpha)$, which is often of the form $M_f(\alpha)$ for various choices of $f$. 

For example, a proof of the minor arc bounds in Vinogradov's theorem
(that all sufficiently large odd integers are the sum of 3 primes) involves bounding $M_\Lambda(\alpha)$ by $O_A(X\log^{-A} X)$
for a particular choice of minor arcs, which along with the fact that by Parseval's identity and the prime number theorem
$\int_0^1 |M_\Lambda(\alpha)|^2d\alpha\ll X\log X$ implies that 
$|M_\Lambda(\alpha)|^3$ is bounded above by $X^2\log^{-A + 1}X$ on average on the minor arcs. 
Here, $\Lambda$ is the von Mangoldt function

Write 
\begin{equation}
I_f(p) := \int_0^1 |M_f(\alpha)|^p d\alpha.
\end{equation}
In the case $p = 1, f = \tau$, it was shown in \cite{GP} that 
\begin{equation}
\sqrt{X}\ll I_{\tau}(1) \ll \sqrt{X}\log X,
\end{equation}
where \[\tau(n) := \sum_{d | n} 1\]
and the methods used to prove the lower bound in that paper should extend to allow one to show that 
\[I_\tau(p)\gg X^{p / 2}\]
for $p < 1$.
For sequences other than $\tau(n)$, similar results have been established in the case $p = 1$.
For example, with $\mu$ the M\"obius function, we have that $X^{1/6}\ll I_\mu(1)\ll X^{1/2}$
where the upper bound follows from Parseval's identity, and the lower bound follows from Theorem 3 in \cite{BR}. 
Estimates for $I_f(1)$ in the case $f$ is an indicator function for the primes have been obtained by Vaughan \cite{Va1} and
Goldston \cite{Go}, and in the case $f$ is the indicator function for integers not divisible by the $r$th power of any
prime by Balog and Ruzsa \cite{BR}.
Later, a result of Keil \cite{Ke} finds with $f$ the indicator function for the $r$-free numbers the exact order of magnitude 
of all moments but $1 + \frac{1}{r}$ in which case the exact order of magnitude is found within a factor of $\log X$.

In this paper, we shall focus on the case $f = \tau$, the divisor function. 
Note that we have that by Parseval's identity
\begin{equation}
\label{eq:L2bound}
I_\tau(2) = \sum_{n\le X} \tau(n)^2 \sim \frac{1}{\pi^2}X(\log X)^3.
\end{equation}
We shall obtain tight estimates on $I_\tau(p)$ for $p > 2$. In particular, we prove the following result. 
\begin{theorem}
We have that for $p > 2$
\begin{equation}
X^{p - 1}(\log X)^p\ll \int_0^1 |M_\tau(\alpha)|^p d\alpha \ll X^{p - 1}(\log X)^p.
\end{equation}
where the implied constants depend only on $p$.
\label{thm:main_thm}
\end{theorem}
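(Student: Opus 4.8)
The lower bound is straightforward: for $|\alpha|\le\tfrac1{8X}$ one has $\cos(2\pi n\alpha)\ge\tfrac1{\sqrt2}$ for every $n\le X$, so (since $\sum_{n\le X}\tau(n)\sim X\log X$) $\Re M_\tau(\alpha)\gg X\log X$ on this interval of measure $\asymp X^{-1}$, whence $\int_0^1|M_\tau(\alpha)|^p\,d\alpha\gg X^{-1}(X\log X)^p=X^{p-1}(\log X)^p$.

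For the upper bound I would dissect $[0,1]$ into Farey arcs of order $Q:=\lceil\sqrt X\,\rceil$, so that each arc lies in $\{a/q+\beta:\ |\beta|\le(qQ)^{-1}\}$ with $1\le q\le Q$, $(a,q)=1$; then $d|\beta|\le q^{-1}$ whenever $d\le\sqrt X$. The heart of the matter is the pointwise bound
\[
|M_\tau(a/q+\beta)|\ \ll\ \frac{X\log X}{q\,(1+X|\beta|)}\ +\ \sqrt X\,(\log X)^{O(1)},\qquad 1\le q\le Q,\quad|\beta|\le\tfrac{1}{q\sqrt X}.
\]
To prove it I would apply Dirichlet's hyperbola identity
\[
M_\tau(\alpha)=2\sum_{d\le\sqrt X}\ \sum_{e\le X/d}e(de\alpha)\ -\ \sum_{d\le\sqrt X}\ \sum_{e\le\sqrt X}e(de\alpha),
\]
and split each double sum into the terms with $q\mid d$ and those with $q\nmid d$, bounding the inner sum over $e$ by $\ll\min(X/d,\|d\alpha\|^{-1})$ (here $\|\cdot\|$ denotes distance to the nearest integer). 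For $q\mid d$, say $d=mq$, one has $\|d\alpha\|=\|d\beta\|=d|\beta|$, so for $q\ge2$ the contribution is $\ll\sum_{m\le\sqrt X/q}\min\bigl(\tfrac{X}{mq},\tfrac{1}{mq|\beta|}\bigr)\ll\frac{X\log X}{q(1+X|\beta|)}$ --- only one power of $\log$ appears here, and this is what forces the final answer to be $(\log X)^p$ rather than larger (the minor wraparound when $q=1$, a neighbourhood of an integer, being dealt with by isolating the $O(1)$ values of $d$ near $\sqrt X$). For $q\nmid d$ one needs $\sum_{d\le\sqrt X,\,q\nmid d}\min(X/d,\|d\alpha\|^{-1})\ll\sqrt X\,(\log X)^{O(1)}$: grouping $d$ by $|da|_q:=q\|da/q\|\in\{1,\dots,\lfloor q/2\rfloor\}$, the classes $|da|_q=j\ge2$ contain $\ll\sqrt X/q$ values of $d$, on which $\|d\alpha\|\ge\|da/q\|-|d\beta|\ge(j-1)/q\ge j/(2q)$, hence contribute $\ll\sqrt X\sum_{2\le j\le q/2}j^{-1}\ll\sqrt X\log q$ altogether; for the class $|da|_q=1$ the twist by $e(d\beta)$ can depress $\|d\alpha\|$ well below $1/q$ only when $d\gtrsim1/(q|\beta|)\gtrsim\sqrt X$ --- which is exactly where the constraint $|\beta|\le(q\sqrt X)^{-1}$ enters --- so there are $O(1)$ such $d$, each contributing $\ll X/\sqrt X=\sqrt X$, while the remaining $d$ of that class contribute $\ll q$ apiece and hence $\ll\sqrt X$ in total. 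The second double sum is treated the same way.

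Granted the pointwise bound, the theorem follows by taking $p$-th powers and integrating: using $\int_{\RR}(1+X|\beta|)^{-p}\,d\beta=2/((p-1)X)$ for $p>1$, and $\varphi(q)\le q$ together with $\sum_{q\le Q}(qQ)^{-1}\varphi(q)\le1$, summing over the $\varphi(q)$ residues $a$ and then over $q\le Q$ gives
\[
\int_0^1|M_\tau(\alpha)|^p\,d\alpha\ \ll_p\ \frac{(X\log X)^p}{X}\sum_{q\ge1}q^{1-p}\ +\ \bigl(\sqrt X\,(\log X)^{O(1)}\bigr)^p\ \ll_p\ X^{p-1}(\log X)^p+X^{p/2}(\log X)^{O(1)}.
\]
The series $\sum_{q\ge1}q^{1-p}$ converges precisely because $p>2$, and $X^{p/2}=o(X^{p-1})$ for the same reason, so the right-hand side is $\ll_p X^{p-1}(\log X)^p$. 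This is exactly where the hypothesis $p>2$ is used, in accordance with the genuinely different size $I_\tau(2)\asymp X(\log X)^3$.

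The step I expect to be the main obstacle is the pointwise estimate, and within it the uniform-in-$\beta$ control of $\sum_{d\le\sqrt X,\,q\nmid d}\min(X/d,\|d\alpha\|^{-1})$ --- in particular the treatment of the $d$ with $\|da/q\|=1/q$, for which one must exploit that an $\alpha$ lying in a Farey arc of order $\sqrt X$ cannot be anomalously close to a rational of denominator $\le\sqrt X$ other than $a/q$ itself.
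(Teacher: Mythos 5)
Your argument is correct in outline but takes a genuinely different route from the paper. The paper runs a classical major/minor arc dissection with major arcs only up to $q\le P=X^{\nu}$: on the major arcs it uses the Pongsriiam--Vaughan evaluation of $\sum_{n\le X}\tau(n)e(an/q)$ to get an asymptotic $M_\tau(\alpha)\approx q^{-1}(\log(X/q^2)+2\gamma-1)v(\alpha-a/q)$, whence $I_\tau(p;\mathfrak M)\asymp X^{p-1}(\log X)^p$ (this yields both the upper and the lower bound at once), and on the minor arcs it interpolates between the pointwise bound of Proposition \ref{prop:minor_arc_ptwise} and the $L^2$ bound $\int_0^1|M_\tau|^2\,d\alpha\ll X(\log X)^3$. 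You instead take a complete Farey dissection at level $Q=\lceil\sqrt X\,\rceil$ and prove a pointwise upper bound on every arc, so you need neither the $L^2$ input nor an asymptotic evaluation; the price is uniformity in all $q\le\sqrt X$, and you must supply the lower bound separately, which your positivity argument near $\alpha=0$ does cleanly (and more simply than the paper). Your pointwise bound $|M_\tau(a/q+\beta)|\ll X\log X\,q^{-1}(1+X|\beta|)^{-1}+\sqrt X(\log X)^{O(1)}$ is a mild strengthening of Proposition \ref{prop:minor_arc_ptwise}, which via Vaughan's Lemma 2.2 gives $(X/q+\sqrt X+q)\log(2Xq)$ with no decay in $\beta$; that decay, together with the convergence of $\sum_q q^{1-p}$ for $p>2$, is exactly what replaces the paper's singular series $\mathfrak S(X,P)$ and its $\int|v(\beta)|^p\,d\beta$ computation.

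One step is stated incorrectly, though it is repairable. In the class $\|da/q\|=1/q$ you claim there are $O(1)$ values of $d$ with $d\gtrsim (q|\beta|)^{-1}$; but these $d$ lie in two residue classes modulo $q$ intersected with an interval of length up to $\sqrt X/2$, so for small $q$ there are $\asymp\sqrt X/q$ of them, and bounding each by $X/d\ll\sqrt X$ would give $\ll X/q$, which is fatal to the error term. The fix is the same spacing argument you already use for $j\ge2$ and for the $q=1$ wraparound: within such a residue class the quantities $\|d\alpha\|$ form an arithmetic progression with common difference $q|\beta|$, so at most one $d$ has $\|d\alpha\|<q|\beta|$ (bound that single term by $X/d\ll qX|\beta|\ll\sqrt X$), while the remaining ones contribute $\ll\sum_{j\ge1}(jq|\beta|)^{-1}\ll(q|\beta|)^{-1}\log X\ll\sqrt X\log X$, using that this case only arises when $(q|\beta|)^{-1}\ll\sqrt X$. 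With that repair the pointwise bound holds and the rest of your argument goes through.
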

\subsection{Notation}
Throughout this paper, all implied constants will be assumed to depend only on $p$ unless otherwise specified. In addition, we write $\lVert\alpha\rVert$ to denote $\inf_{n\in\ZZ} |\alpha - n|$. We write 
$f\asymp g$ to denote that $g\ll f\ll g$ where the two implied constants need not be the same. In addition, any statement with $\eps$ holds for all $\eps >  0$ and implied constants depend on $\eps$ too if it appears.

\section{Preliminaries and setup}
Note that we have that since $\tau(n) = \sum_{d|n}1 = \sum_{uv = n}1$
\begin{align}
\label{eq:hyperbola_trick}
\begin{split}
M_\tau(\alpha) &= \sum_{n\le X}\tau(n)e(n\alpha) = \sum_{uv\le X}e(\alpha uv) 
\\ &= 2\us{u < v}{\sum_{uv\le X}} e(\alpha uv) + \us{u = v}{\sum_{uv\le X}}e(\alpha uv) = 2T(\alpha) + E(\alpha).
\end{split}
\end{align}
Also, let \[v(\beta) := \sum_{n\le X} e(n\beta).\]
We record the following well-known bound on $v(\beta)$ which we will use later.
\begin{lemma}
We have that for $\beta\not\in\ZZ$,  $|v(\beta)|\ll\min(X, \lVert\beta\rVert^{-1}).$
\label{lem:kern_Linfbound}
\end{lemma}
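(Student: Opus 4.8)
The plan is to establish the two bounds in the minimum separately and then combine them. The first bound is immediate: since $v(\beta) = \sum_{n\le X} e(n\beta)$ is a sum of at most $\lfloor X\rfloor$ terms each of modulus $1$, the triangle inequality gives $|v(\beta)| \le \lfloor X\rfloor \le X$. This requires no hypothesis on $\beta$.

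For the second bound, the idea is to sum the geometric series explicitly. Writing $N = \lfloor X\rfloor$, we have $v(\beta) = \sum_{n=1}^{N} e(n\beta) = e(\beta)\dfrac{e(N\beta) - 1}{e(\beta) - 1}$, which is valid precisely because $\beta\notin\ZZ$ so the denominator does not vanish. Bounding the numerator trivially by $2$ yields $|v(\beta)| \le \dfrac{2}{|e(\beta)-1|}$. Now $|e(\beta) - 1| = 2|\sin(\pi\beta)|$, and the elementary inequality $|\sin(\pi t)| \ge 2\lVert t\rVert$ for all real $t$ (which follows from concavity of $\sin$ on $[0,\pi/2]$, reducing to the case $t\in[0,1/2]$) gives $|e(\beta)-1| \ge 4\lVert\beta\rVert$. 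Hence $|v(\beta)| \le \dfrac{1}{2\lVert\beta\rVert} \ll \lVert\beta\rVert^{-1}$.

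Combining the two estimates, $|v(\beta)| \ll \min(X, \lVert\beta\rVert^{-1})$, as claimed. There is no real obstacle here; the only point requiring any care is the passage from $|\sin(\pi\beta)|$ to $\lVert\beta\rVert$, which is the standard comparison used to convert the geometric-sum bound into one phrased in terms of distance to the nearest integer. This lemma is entirely classical and is recorded only for later reference in handling the arcs.
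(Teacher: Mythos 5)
Your proof is correct and takes essentially the same route as the paper: the trivial bound $|v(\beta)|\le X$ plus summing the geometric series and converting $|e(\beta)-1|=2|\sin(\pi\beta)|$ into $\lVert\beta\rVert$ via the standard inequality $|\sin(\pi t)|\ge 2\lVert t\rVert$. If anything your write-up is cleaner than the paper's, which contains a typographical slip in its intermediate display.
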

\begin{proof}
Note that we have that if $\beta\in\ZZ$, we are done since then $v(\beta) = \floor{X}$. Otherwise, we have that by summing the geometric 
series, \begin{equation}|v(\beta)| = \bigg\lvert\frac{e((\floor{X} + 1)\beta) - e(\beta)}{e(\beta) - 1}\bigg\rvert 
= \frac{|1 - e(\floor X\beta)|}{|1 - e(\beta)|} = \frac{\sin(\pi\floor X\alpha)}{\pi\alpha}\ll \frac{1}{|\sin 2\pi\alpha|}\ll \frac{1}{\lVert\alpha\rVert}\label{eq:kern_sum}\end{equation}
\end{proof}
In addition, we shall also use the following result on moments of $v(\beta)$.
\begin{lemma}
For $p > 2$, we have that 
\[\int_0^1 |v(\beta)|^p\asymp X^{p - 1}.\]
\label{lem:kern_Lp}
\end{lemma}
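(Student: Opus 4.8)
The plan is to derive both inequalities from the pointwise bound of Lemma~\ref{lem:kern_Linfbound} together with the elementary behaviour of $v(\beta)$ for $\beta$ close to an integer.

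For the upper bound, I would start from $|v(\beta)|^p \ll \min(X,\lVert\beta\rVert^{-1})^p$, valid for all $\beta\notin\ZZ$ by Lemma~\ref{lem:kern_Linfbound} (the finitely many points of $[0,1]$ lying in $\ZZ$ forming a null set). Since $\lVert\cdot\rVert$ is $1$-periodic and even about $1/2$, the integral $\int_0^1|v(\beta)|^p\,d\beta$ is $\ll \int_0^{1/2}\min(X,\beta^{-1})^p\,d\beta$. I would split this at $\beta = 1/X$: the portion with $\beta\le 1/X$ is $\le X^p\cdot X^{-1} = X^{p-1}$, while on $1/X\le\beta\le 1/2$ we are left with $\int_{1/X}^{1/2}\beta^{-p}\,d\beta$, which, since $p>1$, is dominated by its contribution at the lower endpoint and hence is $\ll_p X^{p-1}$. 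Adding the two pieces gives the claimed upper bound.

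For the lower bound, it suffices to restrict the integral to a short interval around the origin, since that is where the mass of $v$ concentrates. If $\lVert\beta\rVert\le \frac{1}{10X}$ then $|2\pi n\beta| < 1$ for every integer $n\le X$, so $\cos(2\pi n\beta)\ge\cos(2\pi/10) > 0$ uniformly in such $n$ and $\beta$; consequently $|v(\beta)|\ge\Re\, v(\beta) = \sum_{n\le X}\cos(2\pi n\beta)\gg X$. The set of $\beta\in[0,1]$ with $\lVert\beta\rVert\le\frac{1}{10X}$ has measure $\asymp X^{-1}$, so restricting the integral to it yields $\int_0^1|v(\beta)|^p\,d\beta\gg X^p\cdot X^{-1} = X^{p-1}$.

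I do not expect a genuine obstacle here. The one point that really uses a hypothesis is that the tail $\int_{1/X}^{1/2}\beta^{-p}\,d\beta$ has order $X^{p-1}$ only because $p>1$; at $p=1$ it would be of size $\log X$, which is exactly why the corresponding statement degrades by a logarithm there. It is also worth noting that a crude appeal to Hölder's inequality starting from the Parseval identity $\int_0^1|v(\beta)|^2\,d\beta = \floor{X}$ gives only the weaker bound $\int_0^1|v(\beta)|^p\,d\beta\ge X^{p/2}$, which for $p>2$ is of strictly smaller order than $X^{p-1}$; hence the localization near $0$ above is essential for the sharp lower bound.
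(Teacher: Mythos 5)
Your proof is correct, and your lower-bound argument is essentially the paper's: both localize to an interval of length $\asymp X^{-1}$ near the origin on which $|v(\beta)|\gg X$ (the paper extracts this from the closed form $|v(\beta)|=|\sin(\pi\floor{X}\beta)/\sin(\pi\beta)|$ on $[(4X)^{-1},(2X)^{-1}]$, you from $\Re\,v(\beta)=\sum_{n\le X}\cos(2\pi n\beta)\gg X$; these are the same idea). One cosmetic point: for $\beta$ near $1$ the quantity $|2\pi n\beta|$ is not small, so you should say that $\cos(2\pi n\beta)=\cos(2\pi n(\beta-1))$ and bound $|2\pi n(\beta-1)|$, or simply restrict to $[0,(10X)^{-1}]$, which already has measure $\asymp X^{-1}$; this is trivially repaired and not a gap.

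Your upper bound, however, takes a genuinely different route. The paper evaluates the even moments exactly by counting solutions of $x_1+\dots+x_s=y_1+\dots+y_s$ with all variables in $[1,X]$, obtaining $\int_0^1|v|^{2s}\sim C_sX^{2s-1}$, and then interpolates to non-integer $p>2$ by H\"older (log-convexity of $L^p$ norms); this works because $q\mapsto 1-1/q$ is exactly the exponent that is preserved under such interpolation, and it yields asymptotics for the even moments as a byproduct. You instead integrate the pointwise bound $|v(\beta)|\ll\min(X,\lVert\beta\rVert^{-1})$ of Lemma \ref{lem:kern_Linfbound} directly, splitting at $\lVert\beta\rVert=X^{-1}$ and using $p>1$ to control $\int_{1/X}^{1/2}\beta^{-p}\,d\beta\ll_p X^{p-1}$. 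Your argument is more elementary and self-contained (it needs no combinatorial input and no interpolation), and it makes transparent exactly where $p>1$ enters and why a logarithm appears at $p=1$; the paper's argument gives slightly more refined information at even integers. Both are complete proofs of the stated lemma.
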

\begin{proof}
Note that by the third equality in (\ref{eq:kern_sum}), we have that 
\begin{equation}
\int_0^1 |v(\beta)|^p d\beta \ge\int_{[(2X)^{-1}, (4X)^{-1}]} \sin(\pi\floor X\beta)(\sin{\pi\beta})^{-p} d\beta \gg \int_{[(2X)^{-1}, (4X)^{-1}]} X^{-p}d\beta\gg X^{p - 1}.
\label{eq:kern_lower}
\end{equation}
In addition, note that for positive integers $s$, by considering the underlying Diophantine system, we have that
\begin{align*}
\int_0^1 |v(\beta)|^{2s}d\beta &= |\{1\le x_1,\dots,x_s, y_1,\dots,y_s\le X : x_1 + \dots + x_s = y_1 + \dots + y_s\}|
\\&=\sum_{n\le X}\binom{n - 1}{s - 1}^2\sim 
C_sX^{2s - 1}
\end{align*}
for some $C_s > 0$, so the upper bound, and therefore the desired result, follows from H\"older's inequality.
\end{proof}
We will use the circle method to prove the main result. 
In particular, we shall show that the main contribution to the integral $I_\tau(p)$ comes from $\alpha$ close to 
rationals with small denominator by estimating $I_\tau$ around rationals with small denominator and bounding it 
everywhere else.
To that end, let $$\mf M(q, a) = \{\alpha\in [0, 1]: |q\alpha - a|\le PX^{-1}\}$$
with $P = X^{\nu}$ for $\nu > 0$ sufficiently small, and let 
\[\mf M = \bigcup_{q\le P}\underset{(a, q) = 1}{\bigcup_{a = 1}^q}\mf M(q, a), \mf m = [0, 1]\setminus\mf M.\]
For any measurable $\mf B\subseteq [0, 1)$, let
\[I_f(p;\mf B):= \int_{\mf B} |M_f(\alpha)|d\alpha.\]
We shall prove Theorem \ref{thm:main_thm} by using the fact that $I_\tau(p) = I_\tau(p;\mf M) + I_\tau(p; \mf m)$, showing that
$I_\tau(p; \mf m) = o(X^{p - 1}(\log X)^p)$ and showing that $I_\tau(p;\mf M)\asymp X^{p - 1}(\log X)^p$.

\section{The minor arcs}
Our bound on the minor arcs will depend on the following result, which is nontrivial for $X^{\eps}\ll q\ll X^{1 - \eps}$.
\begin{proposition}
If $|q\alpha - a|\le q^{-1}$ for some $(a, q) = 1$, $q\ge 1$, then 
\begin{equation}
M_\tau(\alpha)\ll X\log(2Xq)(q^{-1} + X^{-1/2} + qX^{-1}).
\end{equation}
\label{prop:minor_arc_ptwise}
\end{proposition}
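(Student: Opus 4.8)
The plan is to exploit the hyperbola identity $M_\tau(\alpha) = 2T(\alpha) + E(\alpha)$ from \eqref{eq:hyperbola_trick}. The diagonal term $E(\alpha) = \sum_{u^2 \le X} e(\alpha u^2)$ has at most $\sqrt{X}$ terms, so trivially $E(\alpha) \ll X^{1/2} \ll X\log(2Xq)\cdot X^{-1/2}$, which is absorbed into the claimed bound. Thus everything reduces to estimating $T(\alpha) = \sum_{uv \le X,\, u < v} e(\alpha uv)$, and in fact (dropping the harmless diagonal again) it suffices to bound $\sum_{uv \le X} e(\alpha uv)$. First I would split the range of $u$ dyadically: writing $u \in (U, 2U]$ with $U$ running over powers of $2$ up to $\sqrt{X}$, and for each fixed $u$ the inner sum over $v \le X/u$ is a geometric progression, so by Lemma \ref{lem:kern_Linfbound} it is $\ll \min(X/u, \lVert \alpha u \rVert^{-1})$.

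The heart of the matter is then to sum $\min(X/u, \lVert \alpha u \rVert^{-1})$ over $u$ in a dyadic block $(U, 2U]$, using the rational approximation $|\alpha - a/q| \le 1/(q^2)$ (rewriting the hypothesis $|q\alpha - a| \le q^{-1}$). This is the standard divisor-sum estimate: as $u$ ranges over an interval of length $U$, the points $\alpha u \bmod 1$ are, by the theory of the three-distance theorem / the approximation to $a/q$, roughly equidistributed at scale $1/q$, so the number of $u$ with $\lVert \alpha u \rVert$ in a given dyadic range can be controlled. The classical conclusion (essentially Lemma 2.2 of Vaughan's book, or the treatment in Iwaniec--Kowalski) is
\[
\sum_{u \le Y} \min\left(\frac{Z}{u}, \frac{1}{\lVert \alpha u\rVert}\right) \ll \left(\frac{Y}{q} + 1\right)(q + Z + q\log q)\log(2Yq) \ll \left(\frac{Z}{q} + Y + q + Z\right)\log(2Yq)\cdot\frac{q}{\text{(something)}},
\]
but I would rather just quote/reprove the clean form: $\sum_{u \le Y}\min(ZU^{-1}\cdot\tfrac{U}{u}, \lVert\alpha u\rVert^{-1}) \ll \log(2Zq)\big(Z/q + Y + q\big)$ for $Y \le Z$ — applied here with $Y = Z = X$ after summing the dyadic pieces, this gives $\sum_{u \le X}\min(X/u, \lVert\alpha u\rVert^{-1}) \ll \log(2Xq)(X/q + X^{1/2}\cdot(\ldots) + q)$; being careful that the $u$-sum only runs to $\sqrt{X}$ improves the middle term to the stated $X^{1/2}$ after accounting for the weight $X/u$, but in fact one cleanly gets $X\log(2Xq)(q^{-1} + X^{-1/2} + qX^{-1})$ by tracking that the total length of the $u$-range contributing is $\sqrt X$ and each term is $O(X/u) = O(\sqrt X)$ or $O(\lVert\alpha u\rVert^{-1})$.

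Concretely, the steps in order are: (1) reduce to $T(\alpha)$ via \eqref{eq:hyperbola_trick}, bounding $E(\alpha) \ll X^{1/2}$; (2) for each $u \le \sqrt X$, bound the inner sum over $v$ by $\min(X/u, \lVert\alpha u\rVert^{-1})$ using Lemma \ref{lem:kern_Linfbound}; (3) sum over $u$ using the rational approximation to $\alpha$: partition $u \le \sqrt X$ into blocks on which $\lfloor \alpha u \rfloor$ advances by $1$, note the $\lVert \alpha u\rVert$ values in each block of length $\asymp q$ are spread out in multiples of roughly $1/q$ so contribute $\ll q\log q$, and there are $\ll \sqrt X/q + 1$ such blocks, while the truncation at $X/u$ handles the $u$ for which $\lVert\alpha u\rVert$ is very small; (4) collect terms to get $\ll \log(2Xq)(X q^{-1} + X^{1/2} + q)$, which is exactly $X\log(2Xq)(q^{-1} + X^{-1/2} + qX^{-1})$. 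The main obstacle is step (3): getting the logarithmic factor to be just $\log(2Xq)$ rather than a product of several logarithms, and correctly handling the interaction between the two terms in the $\min$ so that the small-$u$ (large weight $X/u$) range and the generic range are both controlled; this is where I would need to be careful, following the classical divisor-sum argument but with the weight $X/u$ rather than a flat bound.
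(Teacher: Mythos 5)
Your plan is correct and follows the same route as the paper: reduce to $T(\alpha)$ via the hyperbola identity, absorb $E(\alpha)\ll X^{1/2}$, bound each inner sum over $v$ by $\min(X/u,\lVert\alpha u\rVert^{-1})$ using Lemma \ref{lem:kern_Linfbound}, and then apply the standard estimate for $\sum_{u\le X^{1/2}}\min(X/u,\lVert\alpha u\rVert^{-1})$. The only difference is that the paper simply cites Lemma 2.2 of Vaughan's book for this last step, whereas you sketch its proof; your somewhat garbled intermediate display would need cleaning up, but the underlying argument is the intended one.
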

\begin{proof}
We have that by (\ref{eq:hyperbola_trick}) and the trivial bound $|E(\alpha)|\le X^{1/2}$
\[M_\tau(\alpha) = 2T(\alpha) + O(X^{1/2})\]
so it suffices to show that $T(\alpha)\ll X\log(2Xq)(q^{-1} + X^{-1/2} + qX^{-1})$, since we can 
absorb the $O(X^{1/2})$ into the bound since\[X\log(2Xq)(q^{-1} + X^{-1/2} + qX^{-1})\gg X^{1/2}\log X.\]
To this end, note that by the triangle inequality and Lemma \ref{lem:kern_Linfbound}
\[|T(\alpha)|\le \sum_{u\le X^{-1}}\bigg|\sum_{u < v\le X / u} e(\alpha uv)\bigg|\ll \sum_{u\le X^{1/2}}\min(X/u, \lVert\alpha u\rVert^{-1}).\]
The desired result then follows from Lemma 2.2 in \cite{Va}.
\end{proof}
From this, we obtain the following result.
\begin{lemma}
We have that 
\begin{equation}
I_\tau(p;\mf m)\ll X^{p - 1 - \nu / 2}(\log X)^4.
\end{equation}
\label{lem:minor_arc_full}
\end{lemma}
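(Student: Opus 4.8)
The plan is to combine a uniform pointwise bound for $M_\tau$ on the minor arcs, supplied by Proposition~\ref{prop:minor_arc_ptwise}, with the second moment (\ref{eq:L2bound}) through the elementary interpolation inequality $I_\tau(p;\mf m)=\int_{\mf m}|M_\tau|^{p-2}|M_\tau|^2\le\big(\sup_{\alpha\in\mf m}|M_\tau(\alpha)|\big)^{p-2}\int_{\mf m}|M_\tau|^2$, which is available since $p>2$. The point is that the supremum over $\mf m$ gains a genuine power of $X$ over the trivial bound $M_\tau(0)\asymp X\log X$, while the remaining $L^2$ factor is harmless, being $\le\int_0^1|M_\tau|^2\ll X(\log X)^3$.

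The first and only substantive step is to show $\sup_{\alpha\in\mf m}|M_\tau(\alpha)|\ll X^{1-\nu}\log X$. Given $\alpha\in[0,1]$, Dirichlet's theorem (with parameter $X/P$) furnishes coprime integers $a,q$ with $1\le q\le X/P$ and $|q\alpha-a|\le P/X$. If $q\le P$ then $\alpha$ lies in $\mf M$, so for $\alpha\in\mf m$ we must have $P<q\le X/P$; moreover $|q\alpha-a|\le P/X\le q^{-1}$, so Proposition~\ref{prop:minor_arc_ptwise} applies and gives
\[M_\tau(\alpha)\ll X\log(2Xq)\big(q^{-1}+X^{-1/2}+qX^{-1}\big).\]
On the range $P<q\le X/P$ we have $q^{-1}<P^{-1}=X^{-\nu}$ and $qX^{-1}\le P^{-1}=X^{-\nu}$, while $\log(2Xq)\ll\log X$ since $q\le X$; taking $\nu<1/2$ so that $X^{-1/2}$ is dominated by $X^{-\nu}$, this gives $M_\tau(\alpha)\ll X^{1-\nu}\log X$ uniformly for $\alpha\in\mf m$.

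Feeding this bound and (\ref{eq:L2bound}) into the interpolation inequality yields
\[I_\tau(p;\mf m)\le\big(\sup_{\alpha\in\mf m}|M_\tau(\alpha)|\big)^{p-2}\int_0^1|M_\tau(\alpha)|^2\,d\alpha\ll\big(X^{1-\nu}\log X\big)^{p-2}X(\log X)^3\ll X^{p-1-\nu(p-2)}(\log X)^{p+1}.\]
Since $\nu(p-2)>0$ this is $o(X^{p-1}(\log X)^p)$ — which is all that the subsequent argument needs — and once $p$ is bounded away from $2$ the power saving absorbs the logarithmic factors and puts the bound into the stated form $X^{p-1-\nu/2}(\log X)^4$. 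I do not expect a real obstacle here; the nontrivial work is already encapsulated in Proposition~\ref{prop:minor_arc_ptwise}. The only points that need care are (i) verifying that a minor-arc point has Dirichlet denominator $q>P$, which is exactly the place where the definition of $\mf M$ is used, and (ii) checking that $\nu$ may be taken small enough that each of $q^{-1}$, $X^{-1/2}$, $qX^{-1}$ is $\le X^{-\nu}$ throughout $P<q\le X/P$; the interpolation and the bookkeeping of $\log$-powers are routine.
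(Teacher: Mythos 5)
Your proposal is correct and follows essentially the same route as the paper: the same interpolation $\int_{\mf m}|M_\tau|^p\le(\sup_{\mf m}|M_\tau|)^{p-2}\int_0^1|M_\tau|^2$, the same application of Dirichlet's theorem with parameter $X/P$ to force $q>P$ on the minor arcs, and the same invocation of Proposition~\ref{prop:minor_arc_ptwise} (your supremum bound $X^{1-\nu}\log X$ is in fact slightly sharper than the paper's $X^{1-\nu/2}\log X$). Your closing remark about the bookkeeping of the exact exponent and log powers for $p$ near $2$ is a fair observation, but since only the $o(X^{p-1}(\log X)^p)$ estimate is used downstream, this matches the paper's argument in substance.
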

\begin{proof}
Note that we have that 
\[\int_{\mf m} |M_\tau(\alpha)|^pd\alpha\le \left(\sup_{\alpha\in\mf m} |M_\tau(\alpha)|\right)^{p - 2}\int_{\mf m} |M_\tau(\alpha)|^2d\alpha\ll X(\log X)^3\left(\sup_{\alpha\in\mf m} |M_\tau(\alpha)|\right)^{p - 2}.\]
Suppose that $\alpha\in\mf m$. Then, by Dirichlet's theorem, we have that there exist $a, q$ such that $(a, q) = 1, q\le P^{-1}X, |q\alpha - a|\le PX^{-1}$, so it follows that $q > P$ 
, since otherwise, $\alpha$ would be in $\mf M(q, a)$. Then, by 
Proposition \ref{prop:minor_arc_ptwise}, we have that $|M_\tau(\alpha)|\ll X^{1 - \nu / 2}\log X$, and the desired result follows.
\end{proof}

Now, we proceed to estimate the major arcs. To that end, we first record the following estimate.
\begin{proposition}
For $(a, q) = 1$, $q\ge 1$, we have
\[\sum_{n\le X}\tau(n)e\left(\frac{an}{q}\right) = \frac{X}{q}\left(\log\frac{X}{q^2} + 2\gamma - 1\right) + O((X^{1/2} + q)\log{2q}).\]
\end{proposition}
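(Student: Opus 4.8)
The plan is to use the hyperbola method exactly as in \eqref{eq:hyperbola_trick}, writing $\tau(n)=\sum_{uv=n}1$ and splitting the sum over $uv\le X$ according to whether $u\le \sqrt X$ or $v\le \sqrt X$, subtracting the overcount on the square $u,v\le\sqrt X$. Thus
\[
\sum_{n\le X}\tau(n)e\!\left(\tfrac{an}{q}\right)=2\sum_{u\le\sqrt X}\ \sum_{v\le X/u}e\!\left(\tfrac{auv}{q}\right)-\Bigl(\sum_{u\le\sqrt X}e\!\left(\tfrac{au}{q}\right)\Bigr)^{\!2}+O(1),
\]
the $O(1)$ absorbing a possible off-by-one on the diagonal. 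The second term is a geometric sum bounded by $\min(\sqrt X,\lVert a/q\rVert^{-1})\le q$ since $(a,q)=1$, so it contributes $O(q)$, which fits inside the error term. Everything therefore reduces to understanding the inner sum $\sum_{v\le X/u}e(auv/q)$ for each fixed $u\le\sqrt X$.

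For the inner sum I would split $v$ into residue classes mod $q$: write $v=qw+r$ with $0\le r<q$, so that $e(auv/q)=e(aur/q)$ depends only on $r$. The number of $w$ with $qw+r\le X/u$ is $\frac{X}{uq}+O(1)$, uniformly in $r$. Hence
\[
\sum_{v\le X/u}e\!\left(\tfrac{auv}{q}\right)=\Bigl(\tfrac{X}{uq}+O(1)\Bigr)\sum_{r=1}^{q}e\!\left(\tfrac{aur}{q}\right)+O\Bigl(\min\bigl(\tfrac{X}{u},\,\lVert au/q\rVert^{-1}\bigr)\Bigr).
\]
The complete sum $\sum_{r=1}^q e(aur/q)$ equals $q$ if $q\mid u$ and $0$ otherwise (using $(a,q)=1$). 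So the main term survives only for $u$ divisible by $q$, i.e.\ $u=qu'$ with $u'\le\sqrt X/q$, contributing $\frac{X}{u}$ for each such $u$; summing the error terms over all $u\le\sqrt X$ gives $O(\sqrt X)$ from the $O(1)\cdot$(complete sum) pieces — wait, one must be careful here: the $O(1)$ in the count of $w$ multiplies the complete sum which is $q$ when $q\mid u$, so that piece is $O(q)$ per relevant $u$ and $O(\sqrt X)$ in total; and $\sum_{u\le\sqrt X}\min(X/u,\lVert au/q\rVert^{-1})\ll (X/q+\sqrt X+q)\log 2q$ by the standard divisor-type estimate (Lemma 2.2 of \cite{Va}), which is absorbed after multiplying by the constant $2$. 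Collecting terms, $2\sum_{u\le\sqrt X}(\cdots)$ has main term $2\sum_{u'\le\sqrt X/q}\frac{X}{qu'}=\frac{2X}{q}\sum_{u'\le\sqrt X/q}\frac1{u'}=\frac{2X}{q}\bigl(\log\frac{\sqrt X}{q}+\gamma\bigr)+O\bigl(\frac{\sqrt X}{\ ?}\bigr)$, and $\log\frac{\sqrt X}{q}=\frac12\log\frac{X}{q^2}$, so this is $\frac{X}{q}\bigl(\log\frac{X}{q^2}+2\gamma\bigr)$; I expect the remaining $-\frac{X}{q}$ to come out of the more careful asymptotic for $\sum_{u\le\sqrt X/q}1/u$ together with the boundary handling, matching the stated constant $2\gamma-1$.

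The main obstacle is bookkeeping the error terms uniformly in $q$: one must track the $O(1)$ in the count of $w$ (which interacts with the complete exponential sum) and the incomplete-sum tails $\min(X/u,\lVert au/q\rVert^{-1})$, and confirm their sum over $u\le\sqrt X$ is genuinely $O((\sqrt X+q)\log 2q)$ rather than something larger. The cleanest route is to invoke Lemma 2.2 of \cite{Va} for $\sum_{u\le\sqrt X}\min(X/u,\lVert \theta u\rVert^{-1})$ with $\theta=a/q$, exactly as in the proof of Proposition \ref{prop:minor_arc_ptwise}; this yields $\ll(q+X/q+\sqrt X)\log 2q$, which is $\ll(\sqrt X+q)\log 2q$ once we note $X/q\ll \sqrt X$ is \emph{false} in general — so in fact this term forces us to be slightly cleverer, extracting its main part (the $q\mid u$ contribution) by hand as above and bounding only the genuinely oscillating remainder, for which the relevant estimate is $O((\sqrt X+q)\log 2q)$. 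The rest is the elementary asymptotic $\sum_{m\le y}1/m=\log y+\gamma+O(1/y)$ applied with $y=\sqrt X/q$, giving error $O(q/\sqrt X)\cdot\frac{X}{q}=O(\sqrt X)$, comfortably within the claimed bound.
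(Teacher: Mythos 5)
There is a genuine gap in your treatment of the hyperbola correction term, and it sits exactly where the constant $-1$ in $\log\frac{X}{q^2}+2\gamma-1$ comes from. The correct inclusion--exclusion reads
\[
\sum_{uv\le X}e\Bigl(\frac{auv}{q}\Bigr)=2\sum_{u\le\sqrt X}\sum_{v\le X/u}e\Bigl(\frac{auv}{q}\Bigr)-\sum_{u\le\sqrt X}\sum_{v\le\sqrt X}e\Bigl(\frac{auv}{q}\Bigr),
\]
and the subtracted double sum has the bilinear phase $auv/q$, so it is \emph{not} equal to $\bigl(\sum_{u\le\sqrt X}e(au/q)\bigr)^{2}$, whose phase would be $a(u+v)/q$. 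Treating it as the square of a geometric sum and discarding it as $O(q)$ is therefore doubly wrong: the identity fails, and the genuine correction term is not small. Splitting it according to whether $q\mid u$ (exactly as you do for the first double sum) gives $\lfloor\sqrt X/q\rfloor\lfloor\sqrt X\rfloor+O((\sqrt X+q)\log 2q)=\frac{X}{q}+O((\sqrt X+q)\log 2q)$, and subtracting this $\frac{X}{q}$ from your main term $\frac{X}{q}(\log\frac{X}{q^2}+2\gamma)$ is precisely what produces $\frac{X}{q}(\log\frac{X}{q^2}+2\gamma-1)$. Your hope that the missing $-\frac{X}{q}$ will emerge from ``the more careful asymptotic for $\sum_{u'\le\sqrt X/q}1/u'$'' cannot work: the error in $\sum_{m\le y}1/m=\log y+\gamma+O(1/y)$ contributes only $O(\sqrt X)$ after multiplication by $2X/q$ (as you yourself compute), and the floor-function corrections in the count of $v$ likewise contribute only $O(\sqrt X)$.

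Apart from this, your argument is essentially the paper's (which follows Lemma 2.5 of \cite{PV}): both isolate the $q\mid u$ terms as the source of the main term and bound the $q\nmid u$ terms via $\sum_{q\nmid u,\ u\le\sqrt X}\lVert au/q\rVert^{-1}\ll(\sqrt X+q)\log 2q$. Your observation that Lemma 2.2 of \cite{Va} must not be applied blindly here --- because its $X/q$ term arises from the $q\mid u$ contributions you have already extracted by hand --- is correct and is the right way to keep the error at $O((X^{1/2}+q)\log 2q)$. Once the square term is handled as above, the proof closes.
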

\begin{proof}
This is shown in the proof of Lemma 2.5 in \cite{PV}. We shall reproduce its proof below.
Note that we have that by (\ref{eq:hyperbola_trick}) \[\sum_{n\le X}\tau(n)e\left(\frac{an}{q}\right) = \sum_{u\le X^{1/2}}\left(\sum_{v\le X/u}2 - \sum_{v\le X^{1/2}}1\right)e(auv/q).\]
For $q\nmid u$, we have that the inner sums are $\ll \lVert au/q\rVert^{-1}$. The contribution from the remaining terms is then 
\[\frac{X}{q}\left(\log\frac{X}{q^2} + 2\gamma - 1\right) + O(X^{1/2})\]
from which the desired result follows.
\end{proof}
Now, it follows then from this and partial summation that for $\alpha\in\mf M(q, a)$, we have
\begin{equation}
\label{eq:major_arc_est}
M_\tau(\alpha) = \frac{1}{q}\left(\log\frac{X}{q^2} + 2\gamma - 1\right)v(\alpha - a/q) + O(X^{1/2 + \nu}\log X).
\end{equation}
Therefore, we have that 
(by using the binomial theorem for $p\in\ZZ^{+}$, and then using H\"older's inequality to bound the remaining error terms)
\[
|M_\tau(\alpha)|^p = q^{-p}(\log X - 2\log q + 2\gamma - 1)^p|v(\alpha - a/q)|^p + O(X^{p - 1 / 2 + \nu}(\log X)^p)
\]
so it follows that
\begin{multline}
\label{major_arc_sums}
\int_{\mf M} |M_\tau(\alpha)|^p d\alpha = \\
\sum_{q\le P}\us{(a, q) = 1}{\sum_{a = 1}^q}\int_{-PX^{-1}}^{PX^{-1}}
 q^{-p}(\log X - 2\log q + 2\gamma - 1)^p|v(\alpha - a/q)|^p d\beta + O(X^{p - 3 / 2 + 4\nu}(\log X)^p)\\
 = \mf S(X, P)\int_{-PX^{-1}}^{PX^{-1}} |v(\beta)|^pd\beta
 + O(X^{p - 3 / 2 + 4\nu}(\log X)^p)
\end{multline}
where 
\[
\mf S(X, P) := \sum_{q\le P}\varphi(q)q^{-p}(\log X - 2\log q + 2\gamma - 1)^p.
\]
It is easy to show that by partial summation, we have
\begin{equation}
\label{Singular series estimate}
\mf S(X, P)\asymp(\log X)^p.
\end{equation}
Also, note that by Lemmas \ref{lem:kern_Linfbound} and \ref{lem:kern_Lp}, we have that 
\[\int_{-PX^{-1}}^{PX^{-1}} |v(\beta)|^pd\beta 
\gg X^{p - 1} - \int_{[PX^{-1}, 1 - PX^{-1}]} X^{-p} d\beta\gg X^{p - 1}.\]
Theorem \ref{thm:main_thm} then folows since this implies that $I_\tau(p; \mathfrak M)\asymp X^{p - 1}(\log X)^p$.

\end{document}